\newcommand{\Nbb}{\mathbb{N}}% naturals
\newcommand{\Zbb}{\mathbb{Z}}% integers
\newcommand{\Sc}{\mathcal{S}}% configuration space
\newcommand{\xb}{\mathbf{x}}%
\newcommand{\Xb}{\mathbf{X}}%
\newcommand{\yb}{\mathbf{y}}%
\newcommand{\Yb}{\mathbf{Y}}%
\newcommand{\oh}{\frac{1}{2}}%
\newcommand{\nullc}{\mathbf{0}}%
\newcommand{\lrm}{\mathrm{left}}%
\newcommand{\rrm}{\mathrm{right}}%
\newcommand{\bll}{\bullet}%
\newcommand{\emp}[1]{\emph{#1}}%
\newcommand{\Exp}{\operatorname{\mathbf{E}}}%
\newcommand{\Prob}{\operatorname{\mathbf{P}}}%
\newcommand{\ind}{\operatorname{\mathds{1}}}%
\numberwithin{equation}{section}%
\theoremstyle{plain}%
\newtheorem{theorem}{Theorem}%
\newtheorem{corollary}{Corollary}%
\newtheorem{prop}{Proposition}%
\newtheorem{lemma}{Lemma}%
\theoremstyle{definition}%
\newtheorem{definition}{Definition}%
\newtheorem{remark}{Remark}%
\newcommand{\qqed}{\hfill$\blacksquare$}%
\renewenvironment{proof}[1][Proof]{\begin{trivlist}%
\item[\hskip \labelsep {\bfseries #1}]}{\qqed\end{trivlist}}%
\newcommand{\nalaffl}{Institute of Mathematics, Budapest University of Technology and Economics, Egry J. u. 1., Budapest, H-1111, Hungary.\ }%
\newcommand{\titl}{
Phase transition in a double branching annihilating random walk
}%
\newcommand{\auths}{
Attila L\'aszl\'o Nagy\,\footnote{\nalaffl}\\[0.2em]
{\normalsize \url{attilalaszlo.nagy@gmail.com}}
}%
\newcommand{\dat}{\today}%
\title{\titl}%
\author{\auths}%
\date{\dat}%
\begin{document}
\maketitle

\begin{abstract}
This paper investigates the long-time behavior of double branching annihilating random walkers with nearest-neighbor dependent rates. The system consists of even number of particles which can execute nearest-neighbor random walk and they can as well give birth in a parity conserving manner to two other particles with rates $1$ and $b$, respectively, until they meet. Upon meeting, each of the adjacent particles can branch with rate $p\cdot b$ while it can annihilate, i.e.\ hop on, the other particle with rate $p$ for some $0< p\leq 1$. This process first appeared in \cite{baw} and can be considered as the extension of that of \cite{sudburybap}. We prove that in some region of the parameters $(p,b)$, the process survives with positive probability. Combining the extinction result of \cite{sudburybap} it shows a phase transition phenomenon for this model. In some sense our result also shows the sharpness of the assumptions of \cite{dbarw}. We use similar arguments that was developed by M.\ Bramson and L.\ Gray in \cite{bramsongray}.
\end{abstract}

\noindent \textbf{Keywords.} Non-attractive particle system, double branching annihilating random walk, parity conserving, dependent rate, survival, phase transition.

\bigskip

\noindent \textbf{Acknowledgement.} A.\ L.\ N.\ thanks M\'{a}rton Bal\'{a}zs for his careful review and many helpful suggestions to the manuscript. The author also thanks B\'{a}lint T\'{o}th and M\'{a}rton Bal\'{a}zs for general discussions related to (non-attractive) interacting particle systems.

\section{Introduction}%

The system of double branching annihilating random walkers consists of a finite population of particles interacting with each other on the lattice $\Zbb$, which can execute:
\begin{description}
\item[(RW)] nearest-neighbor random walk; or
\item[(BR)] branching, i.e.\ a particle can give birth to two offsprings placing them to the two neighboring lattice points.
\end{description}
To keep each occupation number 0-1, two particles occupying the same position are simultaneously \emp{annihilated}. This can happen both for the (RW) and the (BR) steps. Note that this dynamics conserves the \emp{parity} of the total particle number. We assume throughout the article that \emp{even} number of particles are present initially.

One can look at $\Yb$ as an interface (or boundary) process of another model $\Xb$, which is called as the \emp{swapping voter model} (see \cite{votermodels}). We define $\Xb$ on the \emp{half-integer lattice} $\Zbb+\oh$ to be of $\Yb$ as the magnitude of its discrete spatial gradient. For instance $\Xb=\delta_{\oh}$ if and only if $\Yb=\delta_0+\delta_1$, where $\delta$ is the Kronecker symbol ($\delta_i(j)=1$ if $j=i$ otherwise $0$). To avoid ambiguity we refer to the coordinates of $\Xb$ as \emp{heights}.
Now, the above described dynamics of $\Yb$ can be easily translated into the language of $\Xb$ as:
\begin{description}
\item[(Flip)] (corresponding to \textbf{(RW)})\hfill\\
A height can switch to the value of one of its neighboring heights.
\item[(Exclusion)] (corresponding to \textbf{(BR)})\hfill\\
An adjacent zero-one (one-zero) pair of heights can swap values.
\end{description}
When only the \emp{spin-flip} interactions (Flip) are present the process is in the range of \emp{voter models} (\cite[Part II.]{liggettbiblestoch}). With only the (Exclusion) steps we arrive to the well-known \emp{exclusion processes} (\cite[Part III.]{liggettbiblestoch}). Both of these were \emp{separately} under extensive studies in the past decades. Much less known, however, about those processes in which the above two interactions are \emp{jointly} present. This mixed dynamics makes our process rather interesting and highly non-trivial.

Both processes ($\Xb$ and $\Yb$) are in the class of additive and cancellative interacting particle systems (see \cite{griffeath}). In these sort of systems a certain monotonicity property is violated making them \emp{non-attractive} (see \cite[pp.\ 71--72, pp.\ 380--384]{liggettbible}). This lack of monotonicity practically prevents us to compare these processes via domination arguments. Hence the long-time behavior such as the survival or extinction seem to be difficult to treat in general (see \cite{bramsongray,sudburybap,bramsondurrett,sudburysurvival,blathcoex,rebvoter,blathkurt,extwindow} and further references therein).

\bigskip
\noindent\emp{Earlier results}. The model of double branching annihilating random walkers was introduced by A.\ W.\ Sudbury \cite{sudburybap}. In that article it was shown that the process with constant rate hopping and branching dies out a.s.\ in finite time. Shortly after this, \cite{baw} introduced a double branching annihilating random walk with nearest-neighbor dependent rates which is an extension of Sudbury's process. In this latter case particles perform unit rate hopping and rate $b>0$ branching \emp{until} two of them meet. If a particle is about to hop or branch that effects at least one particle in its neighborhood the corresponding movement takes place with (reduced) rates $p$ or $p\cdot b$, respectively, for some $0< p\leq 1$. Via computer simulations \cite{baw, zhong} demonstrated that this model undergoes phase transition, i.e.\ survival-extinction regimes exist and critical values were numerically determined. Recently, J.\ Blath and N.\ Kurt \cite{blathkurt} considered specific double branching annihilating random walk dynamics for which phase transition was proved. In these latter models, however, the configuration dependent branching can take place for possibly non-nearest neighbor lattice points as well.

Double branching annihilating random walk belongs to the \emp{parity conserving} (PC) class of particle systems which has been in the scope of relevant studies in the Physics literature as well (\cite{cardytaubertheory, cardytauberfield}, for a monograph see \cite[Sec.\ 4.6, pp.\ 117--122]{odor} and further references therein).

\bigskip
\noindent\emp{Results of this paper}. We adopt the setup of \cite{baw} and we rigorously prove that the double branching annihilating random walk with the above nearest-neighbor dependent interaction survives in some region of the rate parameters $(p,b)$ with positive probability. Together with \cite{sudburybap} it then proves a phase transition phenomenon for this model. To the best of our knowledge this is the first time that phase transition is rigorously showed for a nearest-neighbor dependent non-attractive particle system in the PC class. We follow the approach that was developed by M.\ Bramson and L.\ Gray in \cite{bramsongray} for a \emp{parity violating} branching annihilating random walk. We highlight that the survival of this particular model with nearest-neighbor dependent rates in some way shows the sharpness of the assumptions of \cite{dbarw}. In \cite{dbarw} ergodicity, as seen from the left-most particle position, was proved under quite general conditions for the rate functions provided that \emp{odd} number of particles are present initially.

\bigskip
\noindent\emp{Organization of the paper}. We precisely define the processes of interest in Section \ref{sec:models}. Section \ref{sec:results} details the main results while the proofs lie in Section \ref{sec:proofs}.

\section{The models}\label{sec:models}

\paragraph{Double branching annihilating random walk.} Define the configuration space
\begin{equation}\label{eq:configspace}
\Sc = \big\{\yb\in\{0,1\}^{\Zbb}\,:\,\textstyle\sum_{i\in\Zbb}y_i\text{ is even}\big\},
\end{equation}
that is a configuration $\yb=(y_i)_{i\in\Zbb}\in\Sc$ consists of holes and (a finite population of) particles. For a lattice point $i\in\Zbb$ we interpret $y_i=1$ as the presence of a particle while $y_i=0$ means the absence of such a particle. Now, the \emp{double branching annihilating random walk}
\[
\Yb=(\Yb(t))_{t\geq0}=(\ldots,Y_{i-1}(t), Y_i(t), Y_{i+1}(t),\ldots)_{t\geq0}
\]
is a continuous-time Markov process on $\Sc$ which allows the following sort of transitions
\begin{equation}
\begin{aligned}\label{eq:transitions}
\yb\;\overset{r_{i}(\yb)}{\longrightarrow}\;\yb-\delta_i+\delta_{i+1} &\qquad \text{(left jump),}\\
\yb\;\overset{\ell_{i}(\yb)}{\longrightarrow}\;\yb-\delta_{i}+\delta_{i-1} &\qquad \text{(right jump),}\\
\yb\;\overset{b_{i}(\yb)}{\longrightarrow}\;\yb+\delta_{i-1}+\delta_{i+1} &\qquad \text{(branching)}
\end{aligned}
\end{equation}
which, conditioned on $\Yb(t)=\yb\in\Sc$, take place independently of each other with instantaneous rates $r_i(\yb)$, $\ell_i(\yb)$ and $b_i(\yb)$ for $i\in\Zbb$, respectively. We assume that $r_i(\yb)=\ell_i(\yb)=b_i(\yb)=0$ whenever $y_i=0$, that is only particle can perform actions. Note that all the operations of \eqref{eq:transitions} are meant modulo $2$. In the following the initial configuration $\Yb(0)=\Yb_0$ is always chosen to be a deterministic element of $\Sc$.

We remark that \cite{dbarw} dealt with an equivalent formulation of the above dynamical rules on the state space $\{-1,0,1\}^\Zbb$ with positive and negative particles alternating from left to right. For sake of simplicity we have insisted on defining the above process on $\{0, 1\}^{\Zbb}$ which somewhat shortcuts the notations of the present article. However, all the results below would hold in either formulation.

The functions $r_{\bll}(\cdot)$, $\ell_{\bll}(\cdot)$ and $b_{\bll}(\cdot)$ correspond to the nearest-neighbor right, left jumping and branching, respectively. Throughout the article we make the following choices:
\begin{gather}
r_i(\yb) = y_i\cdot(1 - (1-p)\cdot y_{i+1}),\qquad \ell_i(\yb) = y_i\cdot(1 - (1-p)\cdot y_{i-1}),\label{eq:rwrates}\\
b_i(\yb) = b\cdot y_i\cdot(p + (1-p)\cdot(1-y_{i+1})\cdot(1-y_{i-1})),\label{eq:branchrates}
\end{gather}
where $0<b$, $0< p\leq 1$ are fixed parameters and $\yb\in\Sc$. In plain words, particles execute right, left hopping with unit rate until they meet when they can annihilate each other with rate $2p$. On the other hand branching occurs with rate $b$ if both sides of the branching particle are empty, otherwise it will take place with rate $p\cdot b$. Note that the dynamics keeps the configuration space \eqref{eq:configspace} invariant. It also easily follows that $\Yb$ with the above rates has a.s.\ finite number of particles at each time (see for e.g.\ \cite[Ch. I.]{liggettbible}).

Some further notations will ensue. We denote the \emp{empty configuration} by $\nullc$ in which no particles are present (absorbing/vacuum state). Furthermore, let $i_{\lrm}=\min\{i\in\Zbb:y_i\neq 0\}$ and $i_{\rrm}=\max\{i\in\Zbb:y_i\neq 0\}$ be the leftmost and rightmost particle position of $\nullc\neq \yb\in\Sc$, respectively. We define the \emp{width} $w$ of $\yb\in\Sc$ by letting $w(\yb)=i_{\rrm} - i_{\lrm} + 1$ with the convention that $w(\nullc)=0$. Note that $w$ cannot take on the value $1$. For sake of simplicity the width process of $(\Yb(t))_{t\geq0}$ is shortened to $W(t)=w(\Yb(t))$ for $t\geq 0$. With a slight abuse of notation we denote $i_{\lrm}(t)$ and $i_{\rrm}(t)$ by the left and right end particle position, respectively, as long as $\Yb(t)\neq\nullc$.
\paragraph{Swapping voter model.} For a configuration $\yb\in\Sc$ we define the height $\xb\in\{0,1\}^{\Zbb+\oh}$ to obey the relation
\begin{equation*}
y_i = |x_{i+\oh} - x_{i-\oh}| \qquad (i\in\Zbb).
\end{equation*}
That is an adjacent 01 (10) pair in $\xb$ will result in a single particle of $\yb$ or in other words $\yb$ marks the phase boundaries of $\xb$.
\begin{figure}[htbp]
\centering
\includegraphics[scale=0.23]{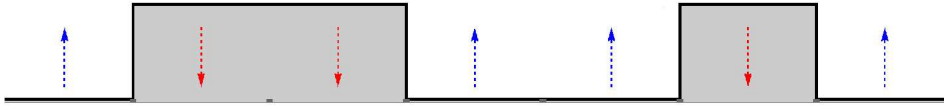}%
\caption[Transitions of double branching annihilating random walkers with heights]{A four particle configuration with heights. A \emp{single arrow} corresponds to a flip ({\color{blue}blue}: from 0 to 1, {\color{red}red}: from 1 to 0). From left to right these can happen with rates $1,1,1,1,1,2p$ and $1$, respectively. Adjacent \emp{opposite arrows} can produce a swap in heights which can happen with rates $b,b,p\cdot b$ and $p\cdot b$, respectively.}
\label{fig:dbarw-model}
\end{figure}

Now, we define the \emp{swapping voter model} $(\Xb(t))_{t\geq0}$ from $(\Yb(t))_{t\geq0}$ as $\xb$ was defined above from $\yb$. The transitions of \eqref{eq:transitions}, conditioned on $\Xb(t)=\xb$ for $t\geq0$, transform to the following rules: a bit of $\xb$ can change from 0 to 1 or from 1 to 0 (\emp{spin-flip}, that is hopping in $\Yb$); while adjacent bits can exchange values, that is from 01 to 10 or from 10 to 01 (\emp{exclusion}, that is branching in $\Yb$). Figure \ref{fig:dbarw-model} exhibits a particular configuration of $\Xb$ with transitions and rates.

\section{Main results}\label{sec:results}%

Our main assertion concerns the survival of double branching annihilating random walkers depending on the choice of the parameters $b$ and $p$. In particular
\begin{theorem}\label{thm:survival}
For every $b>10^4$ and $p<2b^{-2}$ there exists a $c=c(\Yb_0)$ such that
\begin{equation*}
0 < c \leq \Prob\{\Yb(t)\neq \nullc \text{ {\rm for all }}t>0\,|\,\Yb(0)=\Yb_0\}
\end{equation*}
holds for all $\nullc\neq\Yb_0\in\Sc$. That is with positive probability the particles of $\Yb$ will \emp{survive}.
\end{theorem}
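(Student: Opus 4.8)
The plan is to adapt the block-construction (contour/Peierls-type) argument of Bramson and Gray \cite{bramsongray} to this nearest-neighbor dependent setting. The guiding intuition is that when $b$ is very large, a pair of adjacent particles branches at rate $\approx b$ and thus very quickly produces a wide cloud of particles, while annihilation events require two particles to actually meet, which costs a factor of order $p$ (and, crucially, the meeting itself is a rare event at the leftmost/rightmost frontier). Concretely, I would work with the swapping voter model $\Xb$ and track the leftmost and rightmost phase boundaries $i_{\lrm}(t)$ and $i_{\rrm}(t)$, equivalently the width process $W(t)$; survival of $\Yb$ is the event $\{W(t)>0 \text{ for all }t\}$. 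The first step is to set up a renormalized space-time lattice: fix length and time scales $L$ and $T$ (to be chosen as functions of $b$ and $p$), partition $\Zbb\times[0,\infty)$ into blocks of size $\sim L\times T$, and declare a block ``good'' (or ``wet'' / ``open'') if, roughly, the process restricted to that block contains at least one particle pair near each designated sub-interval and has not let the local width collapse.

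The core of the argument is a one-block estimate: conditioned on a favorable configuration at the bottom of a block (say a pair of particles somewhere in the central portion), the probability that the block is good is at least $1-\varepsilon$, with $\varepsilon\to 0$ as $b\to\infty$ (uniformly once $p<2b^{-2}$). I would prove this by a comparison argument: dominate the branching dynamics from below by a supercritical branching process (ignoring the rate reduction from $p$, which can only help, since the free branching rate is the full $b$ whenever neighbors are empty), so that within time $T$ the particle cloud has width at least $L$ with overwhelming probability; and simultaneously bound from above the probability of the ``bad'' events — the frontier particle hopping inward and annihilating, or the whole cloud being wiped out by a cascade of meetings — by a crude union bound over the $O(LT)$ potential annihilation sites, each contributing a factor of order $p\,b$ (the reduced branching rate) against the much larger proliferation rate. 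The hypothesis $b>10^{4}$ makes the branching process comfortably supercritical with good large-deviation control, and $p<2b^{-2}$ makes the total ``inward/annihilation'' rate along the frontier, integrated over the block, small relative to the number of new pairs created; balancing these is exactly where the explicit thresholds come from. The choice of $L$ and $T$ must be made so that both estimates hold at once — $T$ large enough for the cloud to reach width $L$, $L$ large enough that an $L$-wide cloud survives a time-$T$ episode of bad events — and I expect this balancing, together with handling the \emph{dependence} between adjacent blocks (the rates $r_i,\ell_i,b_i$ couple neighboring sites), to be the main technical obstacle; the standard fix is to use a finite-range-dependent oriented percolation comparison (Liggett–Schonmann–Stacey) rather than independent percolation.

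Once the one-block estimate is in place, the conclusion follows from the comparison theorem for $k$-dependent oriented site percolation on $\Zbb\times\Nbb$: if the ``good block'' density exceeds the critical value $p_c$, then with positive probability there is an infinite open path starting from the block containing the initial pair, and along such a path the process retains at least one particle pair at every scale, hence $\Yb(t)\neq\nullc$ for all $t$. This gives a bound $\Prob\{\text{survival}\}\ge c_0>0$ for every initial configuration containing an adjacent particle pair; for a general $\nullc\neq\Yb_0\in\Sc$ one first waits a bounded time for the leftmost two particles to come within distance $2$ (a positive-probability event depending only on $w(\Yb_0)$, using that particles random-walk toward each other and then branch), which produces the required $c=c(\Yb_0)>0$ and completes the proof. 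Combined with the extinction result of \cite{sudburybap} (the $p=1$ constant-rate case dies out a.s.), this establishes the phase transition.
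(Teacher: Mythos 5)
Your proposal takes a genuinely different route from the paper, and unfortunately the route has a gap at its core. The paper does not use a block construction with comparison to $k$-dependent oriented percolation; it follows Bramson and Gray's \emph{separation-time} argument: one proves (Proposition \ref{prop:widthgrowth}) that the increments of the width process stochastically dominate an i.i.d.\ walk with positive drift until the first decrease by more than two units, and then shows by induction on the number $K$ of permanent separation times that $\Prob\{A,\ K\le k\}$ is uniformly small, the inductive step exploiting that after the first permanent separation the two sub-processes die out independently, which squares the bound. The reason the paper goes this way rather than yours is exactly the obstacle it flags in the introduction: the process is cancellative and \emph{non-attractive}, so there is no monotonicity in the initial configuration. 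Your key step --- ``dominate the branching dynamics from below by a supercritical branching process (ignoring the rate reduction from $p$, which can only help)'' --- is precisely the kind of domination that fails here: extra offspring are not harmless, since they annihilate existing particles, and a configuration with more particles does not stochastically dominate one with fewer. Without attractiveness the one-block estimate cannot be closed by a comparison from below, and the Liggett--Schonmann--Stacey reduction only cures finite-range dependence of the block events, not the fact that particles entering a block from outside can wipe out the particles you are counting on.

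A second, more concrete error is in your reading of the rates. An adjacent pair does \emph{not} branch at rate $\approx b$: by \eqref{eq:branchrates} a particle with an occupied neighbour branches at the \emph{reduced} rate $p\cdot b<2/b$, which is tiny in the regime $p<2b^{-2}$. The survival mechanism is not rapid proliferation of a dense cloud but rather that (i) the two frontier particles push the width outward at rate $1+b$ each (free hop plus free branch) versus an inward drift of order $1$, and (ii) upon meeting, annihilation occurs only at the slow rate $p$, so large collapses of the width are rare. This is what Proposition \ref{prop:widthgrowth} quantifies, and it is where the thresholds $b>10^4$ and $p<2b^{-2}$ actually enter (e.g.\ $4p/(2+4p+2pb)<2/(2+b)$ and $pb/(1+2pb)<2/(2+b)$). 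Your final reduction step --- waiting a bounded time to reach a favourable configuration from a general $\Yb_0$ --- matches the paper's use of $u_1=\Prob\{\Yb(1)=\delta_0+\delta_1\}$ and is fine, but the heart of the argument, controlling extinction without any monotone comparison, is missing from the proposal.
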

\begin{corollary}\label{cor:svmsurv}
Under the assumptions of Theorem \ref{thm:survival}, there exists a $\tilde{c}=\tilde{c}(\Xb_0)$ such that
\begin{equation*}
0 < \tilde{c} \leq \Prob\{\Xb(t)\neq \nullc\text{ for all $t>0$}\,|\,\Xb(0)=\Xb_0\}
\end{equation*}
holds for all $\nullc\neq\Xb_0\in\Sc$.
That is the zero-one heights of $\Xb$ \emp{coexist} with positive probability.
\end{corollary}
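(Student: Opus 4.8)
The plan is to reduce the coexistence statement for the swapping voter model $\Xb$ directly to the survival statement for the double branching annihilating random walk $\Yb$ established in Theorem \ref{thm:survival}, using the deterministic correspondence $y_i=|x_{i+\oh}-x_{i-\oh}|$ that was set up in Section \ref{sec:models}. The key observation is that the event $\{\Yb(t)=\nullc\}$ — i.e. no phase boundaries are present in $\Xb(t)$ — is exactly the event that $\Xb(t)$ is one of the two spatially constant configurations $\xb\equiv 0$ or $\xb\equiv 1$ on $\Zbb+\oh$ (the analogue of the all-zeros/all-ones absorbing states of the voter dynamics), while $\Yb(t)\neq\nullc$ is precisely the event that both heights $0$ and $1$ are still simultaneously present, i.e. coexistence persists. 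Thus ``$\Xb(t)\neq\nullc$ for all $t>0$'' should be interpreted in the sense of coexistence, and it holds if and only if $\Yb(t)\neq\nullc$ for all $t>0$ under the coupled evolution.

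Concretely, first I would recall from the construction in Section \ref{sec:models} that $(\Xb(t))_{t\ge0}$ and $(\Yb(t))_{t\ge0}$ are built on the same probability space via the boundary correspondence: a spin-flip at height $i\pm\oh$ in $\Xb$ corresponds to a random-walk step of a particle of $\Yb$, and an exchange of adjacent bits $01\leftrightarrow10$ in $\Xb$ corresponds to a branching event in $\Yb$; annihilation of two coinciding particles of $\Yb$ corresponds to a pair of boundaries of $\Xb$ cancelling. Since $\yb$ is a deterministic (gradient) functional of $\xb$, for every $t$ the configuration $\Yb(t)$ is a deterministic function of $\Xb(t)$, and conversely $\Xb(t)$ is determined by $\Yb(t)$ together with, say, the height value $X_{i_{\lrm}(t)-\oh}(t)$ to the left of the leftmost boundary (or by any single reference bit when $\Yb(t)=\nullc$). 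In particular the two processes generate, modulo that single bit, the same filtration, and $\{\Yb(t)\neq\nullc\text{ for all }t>0\}=\{\Xb(t)\text{ is non-constant for all }t>0\}$ as events.

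Next, given an initial height configuration $\nullc\neq\Xb_0$ — meaning $\Xb_0$ is not spatially constant, so it has at least one (in fact an even number of) $01$/$10$ boundaries — set $\Yb_0$ to be the corresponding boundary configuration, which is a nonzero element of $\Sc$. Applying Theorem \ref{thm:survival} with parameters $b>10^4$, $p<2b^{-2}$ yields a constant $c=c(\Yb_0)>0$ with $\Prob\{\Yb(t)\neq\nullc\text{ for all }t>0\mid\Yb(0)=\Yb_0\}\ge c$. By the equality of events from the previous step, this is exactly $\Prob\{\Xb(t)\neq\nullc\text{ for all }t>0\mid\Xb(0)=\Xb_0\}\ge c$, so putting $\tilde c(\Xb_0):=c(\Yb_0)>0$ finishes the proof. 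The only subtlety — and the one point that needs to be stated carefully rather than being a genuine obstacle — is the bookkeeping of the single extra reference bit, i.e. making precise that the gradient map $\xb\mapsto\yb$ is two-to-one (flipping all heights leaves $\yb$ unchanged) and hence that ``$\Xb(t)$ hits an absorbing state'' and ``$\Yb(t)$ hits $\nullc$'' are the same event; once this identification is in place the corollary is immediate from Theorem \ref{thm:survival}.
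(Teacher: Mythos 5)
Your proposal is correct and follows essentially the same route as the paper, which simply invokes the correspondence between $\Xb$ and $\Yb$ from Section \ref{sec:models}; you have merely spelled out the details (the gradient map, the two-to-one ambiguity in the reference bit, and the equality of the survival and coexistence events) that the paper leaves implicit. No gap.
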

\begin{remark}
On the heuristic level the above results seem to be quite straightforward since two adjacent particles are likely to \emp{jump away} rather than to annihilate each other if $p<1$. However, the non-attractive property of $\Yb$ makes this reasoning highly non-trivial.
\end{remark}
\begin{remark}
Theorem \ref{thm:survival} implies that the width of $\Yb$ unboundedly grows in time assuming survival. This is not immediate (but is expected) for the total particle number of $\Yb$ due to the parity conserving property.
\end{remark}
\begin{remark}
For each fixed $b>10^4$ the model exhibits \emp{phase transition} as $p$ is tuned from $0$ to $1$. In particular for $p=1$ the process dies out a.s.\ in finite time (see \cite{sudburybap}), while for $p<2b^{-2}$ it survives with positive probability (see above, Theorem \ref{thm:survival}). Articles \cite{baw, zhong} numerically showed that the critical value is strictly less than $1$. Currently, there are no rigorous methods available for determining the exact critical value.
\end{remark}
\begin{remark}
We remark that the hopping rates of \eqref{eq:rwrates}--\eqref{eq:branchrates} do \emp{not} satisfy the \textbf{A4} assumption of \cite{dbarw}, where, under quite general conditions, the ergodicity of $\Yb$ as seen from the leftmost particle position (i.e.\ the interface tightness of $\Xb$) was proven. (Note that \cite{dbarw} assumes \emp{odd} number of initial particles.) Hence, Theorem \ref{thm:survival} shows the \emp{sharpness} of the assumptions of \cite{dbarw} provided that interface tightness of $\Xb$ implies extinction for $\Yb$ in this model (see also \cite[Question Q2. on pp. 64]{votermodels}).
\end{remark}

\section{Proofs}\label{sec:proofs}%

We divide the proof of Theorem \ref{thm:survival} into two subsections. In the first one we deduce an estimate for the growth of the width which we will heavily use in the second part to conclude the survival of $\Yb$.

\subsection{Growth of the width process}%

One of the key points of survival is a domination argument based on a lower estimate of the width process $W$. The difficulty arising from the analysis of $W$ is that it can decrease by more than $2$ units at sometimes but can only grow at most by one unit at any transition. Along with some additional conditions on the rates we will show that this sort of ``large decreases in width'' happens rarely.

In the first step we establish a lower estimate to $W$ until it does not decrease by more than two units. Let $0<\tau_1,\tau_2,\ldots$ be the transition times of $(W(t))_{t\geq0}$, $\tau_0\equiv0$ and define
\[
N=\inf\{n\in\Nbb:W(\tau_n)=0\text{ or } W(\tau_{n})-W(\tau_{n-1})<-2\}.
\]
In plain words the process $\Yb$ dies out or its width first decreases by more than two units in the $N^{\text{th}}$ step.
Next, let $Z_1,Z_2,\ldots$ be a sequence of independent random variables which are also independent of the process $\Yb$ having common distribution:
\begin{align*}
\Prob\{Z_n=-2\} = \Prob\{Z_n=-1\} & = \frac{2}{2+b}\\
\Prob\{Z_n=+1\} &= 1 - \frac{4}{2+b}
\end{align*}
for all $n\in\Zbb^+$ provided that $b>2$. We also define the \emp{embedded growth process} of $W$ being
\begin{equation}\label{eq:widthcoupling}
V_n=
\begin{cases}
W(\tau_n)-W(\tau_{n-1})& \text{if $n<N$};\\
Z_n& \text{otherwise}.
\end{cases}
\end{equation}
We are ready to state the following assertion concerning the width process.
\begin{prop}\label{prop:widthgrowth}
For all $n\in\Zbb^+$: $V_1+V_2+\cdots+V_n$ \emp{stochastically dominates} $Z_1+Z_2+\cdots+Z_n$ with $\Exp Z_n > 0$ provided that $b>8$ and $p < 2b^{-2}$ hold.
\end{prop}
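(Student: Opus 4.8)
The plan is to compare, step by step, the embedded increments $V_n$ with the i.i.d. increments $Z_n$, using a coupling argument combined with the strong Markov property at the transition times $\tau_n$. By the definition \eqref{eq:widthcoupling}, once $n \geq N$ the increment $V_n$ equals $Z_n$ exactly, so on that event nothing is to be proved. The substance lies in the range $n < N$, where $V_n = W(\tau_n) - W(\tau_{n-1})$ is, by the very definition of $N$, an integer in $\{-2, -1, +1\}$ (it cannot be $0$ since $\tau_n$ is a transition time of $W$, and it cannot be below $-2$ before step $N$). So the first step is to verify that on $\{n < N\}$ the conditional law of $V_n$ given $\mathcal{F}_{\tau_{n-1}}$ is supported on exactly the three values $\{-2,-1,+1\}$ that $Z_n$ takes, and then to show this conditional law is stochastically dominated by the law of $Z_n$; equivalently, that $\Prob\{V_n = +1 \mid \mathcal{F}_{\tau_{n-1}}\} \geq \Prob\{Z_n = +1\} = 1 - \tfrac{4}{2+b}$ on that event.

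The second and central step is therefore a rate computation: for an arbitrary configuration $\yb \in \Sc$ with $\yb \neq \nullc$, one must bound below the probability that the next width-changing transition increases $W$ by one, as opposed to decreasing it. A width increase of $+1$ happens precisely when the leftmost particle jumps left, or the rightmost particle jumps right, or a branching event occurs at the leftmost or rightmost particle — and the relevant rates here are either $1$ (for a jump into an empty neighbor, from \eqref{eq:rwrates}) or $b$ (for a branching with an empty outer neighbor, from \eqref{eq:branchrates}), since the outward side of an end particle is always empty. A width decrease, on the other hand, arises from the inward jump of an end particle (rate at most $1$) or, when $W=2$, from annihilation (rate $2p$); the total rate of all "bad" transitions out of $\yb$ that affect $W$ is bounded by a constant multiple of $1$ plus the annihilation contribution. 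The worst case is when the two end particles are close: for $W \geq 3$ one gets at least two independent $+1$ channels each of rate $\geq b$ (branching outward at each end) against $O(1)$ of inward-jump rate, giving a ratio $\to 1$ as $b \to \infty$; for $W = 2$ the two particles are adjacent and one must additionally dominate the $2p$ annihilation rate, which is where the hypothesis $p < 2b^{-2}$ enters — the annihilation rate is then negligible compared to the branching rate $2b$. Carrying the arithmetic through, one checks that in every case $\Prob\{\text{next width move is } +1 \mid \yb\} \geq 1 - \tfrac{4}{2+b}$ once $b > 8$, which gives the desired one-step stochastic domination.

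The third step assembles these one-step comparisons into the claimed domination of the partial sums. Because the $Z_n$ are i.i.d. and independent of $\Yb$, and because the one-step domination holds conditionally on $\mathcal{F}_{\tau_{n-1}}$ uniformly over the configuration, a standard inductive coupling (build $V_n$ and $Z_n$ on a common probability space so that $V_n \geq Z_n$ whenever $n < N$, and $V_n = Z_n$ when $n \geq N$) yields $V_1 + \cdots + V_n \geq Z_1 + \cdots + Z_n$ pathwise, hence the stochastic domination of the sums. Finally $\Exp Z_n = -2\cdot\tfrac{2}{2+b} - 1\cdot\tfrac{2}{2+b} + 1\cdot(1 - \tfrac{4}{2+b}) = 1 - \tfrac{10}{2+b}$, which is strictly positive as soon as $b > 8$, completing the proof. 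I expect the main obstacle to be the $W = 2$ boundary case in step two: there the two end particles coincide as "adjacent," the outward-branching and outward-jump channels must be counted carefully so as not to double-count, and the annihilation rate $2p$ must be absorbed — this is precisely the point at which the quantitative hypothesis $p < 2b^{-2}$ is needed and must be used sharply, whereas the bulk case $W \geq 3$ is comparatively soft.
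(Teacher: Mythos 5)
There is a genuine gap in your reduction. For random variables supported on $\{-2,-1,+1\}$, stochastic domination of (the conditional law of) $V_n$ over $Z_n$ requires \emph{two} inequalities: $\Prob\{V_n\leq -2\mid\cdot\}\leq\Prob\{Z_n\leq-2\}=\frac{2}{2+b}$ \emph{and} $\Prob\{V_n\leq -1\mid\cdot\}\leq\Prob\{Z_n\leq-1\}=\frac{4}{2+b}$. Your proposed target, $\Prob\{V_n=+1\mid\cdot\}\geq 1-\frac{4}{2+b}$, is exactly the second inequality and is therefore necessary but not sufficient — it is \emph{not} "equivalent" to domination, since the remaining mass $\frac{4}{2+b}$ could all sit on $-2$, in which case no pointwise coupling with $V_n\geq Z_n$ exists (Strassen) and the partial sums need not dominate. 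The paper instead bounds $\Prob\{V_n=-2\mid\cdot\}$ and $\Prob\{V_n=-1\mid\cdot\}$ separately, each by $\frac{2}{2+b}$, which does give both CDF inequalities; your argument is missing the separate control of the $-2$ (annihilation) probability.

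Your rate bookkeeping in step two also contains errors that would matter if you carried the arithmetic through. Outward branching of an end particle has rate $b$ only when \emph{both} neighbors are empty; by \eqref{eq:branchrates}, if the inner neighbor is occupied the rate drops to $p\cdot b$, so the claim of "two independent $+1$ channels each of rate $\geq b$" with ratio tending to $1$ as $b\to\infty$ fails precisely in the worst case (end particle with an occupied inner neighbor), where the $+1$ rate at that end is only $1+pb$ and smallness of $p$, not largeness of $b$, saves the bound. Annihilation is likewise not confined to $W=2$: it occurs at rate $2p$ at either end whenever the end particle is adjacent to another particle. Finally, you omit the channel by which the width decreases by exactly $1$ when the inner neighbor is occupied, namely the \emph{second} particle branching (rate $p\cdot b$) and thereby annihilating the end particle — this is exactly the term the paper bounds by $\frac{p\cdot b}{1+2p\cdot b}<\frac{2}{2+b}$ using $p<2b^{-2}$, and it is the place where that hypothesis is genuinely needed for the $-1$ estimate, not the $W=2$ configuration you single out.
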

\begin{proof}[\textbf{Proof of Proposition \ref{prop:widthgrowth}.}]
Since the range of the corresponding random variables only contains three distinct elements ($\{-2,-1,1\}$), it is enough to prove that
\begin{align*}
\Prob\{V_n=-2\,|\,V_1,\ldots,V_{n-1}\}&\leq \Prob\{Z_n=-2\}\\
\Prob\{V_n=-1\,|\,V_1,\ldots,V_{n-1}\}&\leq \Prob\{Z_n=-1\}
\end{align*}
hold for all $n\in\Zbb^+$ whenever $b>8$ and $p < 2b^{-2}$. Indeed, we show that
\begin{align}
\Prob\{V_n=-2\,|\,\Yb(\tau_{n-1}), n<N\}&\leq \frac{2}{2+b},\label{eq:decrm2}\\
\Prob\{V_n=-1\,|\,\Yb(\tau_{n-1}), n<N\}&\leq \frac{2}{2+b}\label{eq:decrm1}
\end{align}
from which the domination easily follows using the strong Markov property of $W$ for the stopping time $N$ since $V_n=Z_n$ for all $n\geq N$.

For \eqref{eq:decrm2} we determine the maximum rate at which the width can decrease by at least two units, and the minimum rate it can increase by one. The former one is clearly $4p$ by a jumping annihilation at either side of $W$, while the latter is $2+2pb$ by a jumping or a branching of the first and last particles of $\Yb$. Putting these together we get the upper bound of \eqref{eq:decrm2}:
\[
\Prob\{V_n=-2\,|\,\Yb(\tau_{n-1}), n<N\}\leq \frac{4p}{2+4p+2p\cdot b} < \frac{2}{2+b}.
\]

In case of \eqref{eq:decrm1}, we pretty much copy the previous idea but this time we need to take into account the possible occupations at lattice points $i_{\lrm}+1$ and $i_{\rrm}-1$. So let $E$ be the event that the change in the width at time $\tau_n$ occurs at the left end. Since the rate at which the left end can decrease under $E$ is $1+b$ while it can decrease with unit rate, it follows that
\[
\Prob\{V_n=-1\,|\,\Yb(\tau_{n-1}),n<N,E,Y_{i_{\lrm}(\tau_{n}^-)+1}=0\}\leq \frac{1}{2+b}.
\]
Similarly
\[
\Prob\{V_n=-1\,|\,\Yb(\tau_{n-1}),n<N,E,Y_{i_{\lrm}(\tau_{n}^-)+1}=1\}\leq \frac{p\cdot b}{1+2p\cdot b} < \frac{2}{2+b}
\]
using the bound $p < 2b^{-2}$. By symmetry reasons the same bounds would also hold when the change in the width happens at the right end particle. So we end up with $0 < \Exp Z_n = 1 - \frac{10}{2+b}$ when $b>8$.
\end{proof}

\subsection{Survival via separation times}%

Closely following \cite{bramsongray} we first fix some notions which will turn out to be crucial for the subsequent argument.
\begin{definition}[$i$-gap, $i$-transition]%\label{def:gaps}
We say that an $i$-gap occurs at point $i\in\Zbb^+$ at some time $t\geq0$ if
\begin{enumerate}[(i)]
\item $Y_{i_{\lrm}(t) + i - 1}(t) = 1$, $Y_{i_{\lrm} + i}(t) = 0$ and
\item either $Y_{i_{\lrm} + i + 1}(t) = 1$ or $Y_{i_{\lrm} + i + 2}(t) = 1$ holds.
\end{enumerate}
A transition that produces an $i$-gap will be called an $i$-transition.
\end{definition}
\begin{definition}[separation time]%\label{def:septime}
We say that at time $\sigma\geq 0$ the process is \emp{permanently separated} by a gap if
\begin{enumerate}[(i)]
\item for some $i\in\Zbb^+$, $i<w(\Yb(\sigma))$: $Y_{i_{\lrm}(\sigma)+i}(\sigma)=0$; and
\item for all $t>\sigma$, the particles of $(Y_j(\sigma))_{j<i}$ and all their subsequent descendants do \emp{not} meet, i.e.\ become adjacent, the remaining particles that is with $(Y_j(\sigma))_{j > i}$ and none of their descendants.
\end{enumerate}
For sake of brevity we also say that $\sigma$ is a \emp{separation time}.
\end{definition}
The last assumption of the previous definition implies that at the separation time $\sigma$ the process $\Yb$ falls apart into two sub-processes being separated by at least one vacant site for every subsequent time. Notice that by its definition $\sigma$ is \emp{not} a Markovian stopping time and so we need to be careful restarting the process from $\sigma$.

The number of \emp{positive separation times} is denoted by $K$. Furthermore, let $A$ be the event:
\[
A = \big\{\text{$0$ is \emp{not} a separation time and there exists a $t^*>0$ s.\ t. } \Yb(t^*)=\nullc\big\}.
\]
It is worth noting that under $A$, a separation time $\sigma$ is always produced by an $i$-transition for some $i\in\Zbb^+$.
Concerning $A$ we will prove the following
\begin{lemma}\label{lem:separation}
For all $k\in\Zbb$ and $\nullc\neq\Yb_0\in\Sc$ we have
\begin{equation}\label{eq:sepbound}
\Prob\{A\text{ {\rm and }}K\leq k\,|\,\Yb(0)=\Yb_0\} < \frac{1}{20b} < 10^{-5}
\end{equation}
provided that $b>10^4$ and $p<2b^{-2}$.
\end{lemma}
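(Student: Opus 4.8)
The plan is to show that the event $A$ — extinction without $0$ being a separation time — forces at least one separation time to occur on the way to extinction, unless some rare event happens; iterating this should give the bound $\Prob\{A,\,K\le k\}<\tfrac1{20b}$ uniformly in $k$. Concretely, under $A$ the width process must eventually reach $0$, and by Proposition \ref{prop:widthgrowth} the embedded growth walk $Z_1+\cdots+Z_n$ has strictly positive drift, so $W$ (which stochastically dominates this walk up to the stopping time $N$) is unlikely ever to return to $0$ once it has grown moderately large; the only way it can die is if it stays small, or if a ``large decrease'' event $\{n\ge N\}$ occurs, and the latter has probability at most $\tfrac{4}{2+b}$ per relevant step by the estimates \eqref{eq:decrm2}--\eqref{eq:decrm1}. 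I would make this quantitative: the probability that the width-$Z$-walk started from width $2$ ever hits $0$ is at most some explicit geometric-type bound of order $b^{-1}$ (this is where $b>10^4$ gets used to make the constant small enough to beat $\tfrac1{20b}$).

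The heart of the argument, following \cite{bramsongray}, is the gap/separation dichotomy: I would argue that whenever $W$ has grown large (say $W\ge 3$, so that an interior vacant site exists), an $i$-transition produces an $i$-gap, and I must control the probability that such a gap is \emph{not} permanent, i.e.\ that the left cluster $(Y_j)_{j<i}$ and the right cluster $(Y_j)_{j>i}$ eventually become adjacent again. This reattachment requires the two clusters' innermost particles to random-walk back into contact across the gap. Here I would couple the gap width with another auxiliary walk: the gap, once created with width $\ge 1$, evolves by jumps of its two bounding particles, and because a particle adjacent to the other cluster jumps \emph{away} at rate $1$ but \emph{toward} (annihilating) only at rate $p\le 2b^{-2}$, and branches inward only at reduced rate $pb$, the gap-width walk also has positive drift away from $0$ — so with probability at least $1-O(b^{-1})$ the gap never closes, making the transition a genuine separation time. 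Summing the two small probabilities (never growing large; or growing large but the first interior gap reattaches) gives a single-attempt failure probability below $\tfrac1{20b}$, and since on $A$ we have $\Yb(t^*)=\nullc$ while $0$ is not a separation time, this failure must occur every time the width revisits the small regime; if $K\le k$ this can be leveraged into the stated bound (the bound is required uniform in $k$, so in fact a \emph{single} successful separation attempt suffices to rule out $A$, and $\Prob\{A\}$ itself is $<\tfrac1{20b}$ regardless of $K$).

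The main obstacle I anticipate is the non-Markovian nature of the separation time $\sigma$ (flagged in the text right after the definition): one cannot simply restart the process at $\sigma$ with the strong Markov property, because deciding that a gap is \emph{permanent} looks into the future. The standard workaround is to not condition on permanence but instead bound, via a supermartingale / first-passage estimate for the gap-width auxiliary walk described above, the probability that the clusters ever reattach — an event that \emph{is} measurable and to which ordinary Markov arguments apply. A secondary technical point is that ``large decreases'' in $W$ (drops by more than two) and the coupling in \eqref{eq:widthcoupling} only give us the lower bound on $W$ up to the stopping time $N$; I would need to check that each such large-decrease event either itself creates a permanent gap or is charged against the $O(b^{-1})$ budget, and that the finitely-many particles guarantee that only finitely many attempts matter. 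Once these two points are handled, assembling the explicit constants so the final product of geometric factors lands below $\tfrac1{20b}<10^{-5}$ is routine given $b>10^4$.
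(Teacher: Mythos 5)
Your proposal contains two substantive errors, either of which is fatal on its own. First, you assert that ``a single successful separation attempt suffices to rule out $A$.'' It does not. A separation time only guarantees that the two clusters $(Y_j(\sigma))_{j<i}$ and $(Y_j(\sigma))_{j>i}$ never become adjacent again; under $A$ each cluster necessarily carries an even number of particles (the event $B_i(\sigma)$ in the paper's proof), so each can, and with positive probability does, die out on its own, after which $\Yb(t^*)=\nullc$ and $A$ still occurs. This is exactly why the paper's proof is an induction on $k$: at the \emph{first} separation time the process splits into two independent sub-processes, extinction of $\Yb$ forces extinction of \emph{both}, each with at most $m-1$ separation times, and applying the induction hypothesis to each factor produces the crucial squaring $C(b)^2$ (with weights $2^{-i}/i^2$ and $2^{-(j-i-1)}/(j-i-1)^2$ whose convolution is summable), which is what closes the recursion $10^4\, C(b)^2\leq C(b)-128/(2+b^2)$. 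Without this multiplicative structure there is no way to obtain a bound uniform in $k$, and your ``$\Prob\{A\}<1/(20b)$ regardless of $K$'' is asserted rather than derived.

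Second, your claim that a newly created gap stays open with probability $1-O(b^{-1})$ because a bounding particle ``jumps toward (annihilating) only at rate $p$'' misreads the rates \eqref{eq:rwrates}: the suppression to rate $p$ applies only when the \emph{target site is occupied}. A particle bounding a vacant gap site hops \emph{into} it at full rate $1$ (and can branch into it at rate $b$ or $p\cdot b$), after which it is adjacent to the other cluster, i.e.\ the clusters have met. So gaps close at rate at least $2$ and are typically \emph{not} permanent; the positive-drift gap-width walk you invoke does not exist. The paper never needs gaps to be permanent with high probability: it handles the non-Markovianity of $\sigma$ by decomposing over the Markovian stopping times $\sigma_n^i$ at which an $i$-gap occurs, applying the strong Markov property and a coupling to two independent copies there, and controlling the number of such attempts by the expected number of visits of the width process to level $j$. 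Your opening observations (positive drift of the embedded width walk via Proposition \ref{prop:widthgrowth} and exponential control of returns to small width) do match the paper's $k=0$ base case, but the heart of the lemma, the inductive splitting at the first separation time, is absent from your argument.
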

\begin{proof}[\textbf{Proof of Theorem \ref{thm:survival}.}]
By letting $k\to +\infty$ the previous Lemma \ref{lem:separation} implies that
\begin{equation}\label{eq:lemmaimpliedsurv}
\Prob\{A\,|\,\Yb(0)=\Yb_0\}=\Prob\{A, K<+\infty\,|\,\Yb(0)=\Yb_0\}<1.
\end{equation}
If there are holes between the particles of $\Yb_0$ then $0$ can possibly be a separation time. To circumvent this difficulty we start the process $\Yb$ from $\Yb_0$ and stop at time $1$. Let $u_1 = \Prob\{\Yb(1)=\delta_0+\delta_1\,|\,\Yb(0)=\Yb_0\}$. Since the dynamics allows $\Yb$ to access from any configuration (besides $\nullc$) to any other configuration with positive probability in finite time, it easily follows that $0<u_1<1$. Then we can write that
\begin{align*}
\Prob\{\text{$\Yb$ dies out}\,|\,\Yb(0)=\Yb_0\}
&=\Prob\{\text{$\Yb$ dies out}, K<+\infty\,|\,\Yb(0)=\Yb_0\}\\
&=\Prob\{\text{$\Yb$ dies out}, K<+\infty\,|\,\Yb(1)=\delta_0+\delta_1\}\cdot u_1\\
&+\Prob\{\text{$\Yb$ dies out}, K<+\infty\,|\,\Yb(1)\neq \delta_0+\delta_1\}\cdot(1 - u_1)\\
&\leq 1 + (\Prob\{A, K<+\infty\,|\,\Yb(0)=\delta_0+\delta_1\}-1)\cdot u_1,
\end{align*}
which is strictly less than $1$ by \eqref{eq:lemmaimpliedsurv}. At the last inequality of the previous display we indeed restarted the process from $\delta_0+\delta_1$. The proof is then complete.
\end{proof}
\begin{proof}[\textbf{Proof of Corollary \ref{cor:svmsurv}.}]
It directly comes from the bijection we described in the second part of Section \ref{sec:models}.
\end{proof}
\begin{proof}[\textbf{Proof of Lemma \ref{lem:separation}.}]
By induction on $k$, we are going to prove that with $C(b)=1/(20b)$
\begin{equation}\label{eq:inductionbound}
\Prob\{A\text{ and }K\leq k\,|\,\Yb(0)=\Yb_0\} \leq C(b)\cdot\frac{2^{-w(\Yb_0)}}{w(\Yb_0)^2}
\end{equation}
holds for all $b>10^4$ and $p<2b^{-2}$, which implies \eqref{eq:sepbound}.
\paragraph{Case $k=0$.} In this case there are no separation times at all which implies that the width process $(W(t))_{t\geq0}$ never decreases by more than $2$ units. Now, let $M$ be the number of times the process $(w(\Yb_0)+V_1+\cdots+V_n)_{n\in\Zbb^+}$ hits the value $2$. Recall the definition of $V$'s from \eqref{eq:widthcoupling}.
For the process $\Yb$ to die out it first needs to reach a configuration in which two neighboring particles are only present and then a jumping annihilation (with probability $\frac{p}{1+p+p\cdot b}$) results in the absorbing state $\nullc$. Let $\kappa_n$ be the $n^{\text{th}}$ random index for which $(W(\tau_l))_{l\in\Zbb^+_0}$ reaches $2$. We define $\kappa_n$ to be $+\infty$ in case of the process $\Yb$ would die out before $W$ reached $2$ for the $n^{\text{th}}$ time.
It follows that
\begin{multline*}
\Prob\{A \text{ and } K=0\,|\,\Yb(0)=\Yb_0\}\\
\leq \sum_{n=1}^{+\infty}\Prob\{\Yb(\tau_{\kappa_n+1})=\nullc,\kappa_n<+\infty, K=0\,|\,\Yb(0)=\Yb_0\}\\
\leq \sum_{n=1}^{+\infty}\Prob\{\Yb(\tau_{\kappa_n+1})=\nullc, \kappa_n<+\infty, M\geq n\,|\,\Yb(0)=\Yb_0\}\\
\leq \sum_{n=1}^{+\infty}
\Prob\{\Yb(\tau_{\kappa_n+1})=\nullc\,|\,\kappa_n<+\infty, M\geq n, \Yb(0)=\Yb_0\}\\
\cdot \Prob\{M\geq n\,|\, \Yb(0)=\Yb_0\}\\
\leq \frac{p}{1+p+p\cdot b}\cdot\sum_{n=1}^{+\infty}\Prob\{M\geq n\,|\,\Yb(0)=\Yb_0\}
\end{multline*}
Pick a $v>1$ and define $\gamma(v,b)=\Exp v^{-Z_1}=\frac{1}{2+b}\cdot(2v^2 + 2v + (b-2)/v)$. It is not hard to see that for each $b>10^4$ there exists an $v^*=v^*(b) \geq 4$ such that $\gamma(v^*,b)=\oh$. In the following we use this setting.
\begin{align}
&\Prob\{M\geq n\,|\,\Yb(0)=\Yb_0\}\nonumber\\
&\quad\leq \sum_{l=n}^{+\infty}\Prob\{V_1+V_2+\cdots+V_l \leq 2-w(\Yb_0)\,|\,\Yb(0)=\Yb_0\}\nonumber\\
&\quad\leq \sum_{l=n}^{+\infty}\Prob\{Z_1+Z_2+\cdots+Z_l\leq 2-w(\Yb_0)\,|\,\Yb(0)=\Yb_0\}\nonumber\\
&\quad\leq \sum_{l=n}^{+\infty}\Prob\{(v^*)^{-Z_1-Z_2-\cdots-Z_l} \geq (v^*)^{w(\Yb_0)-2}\,|\,\Yb(0)=\Yb_0\}\nonumber\\
&\quad\leq \bigg(\frac{1}{v^*}\bigg)^{w(\Yb_0)-2}\cdot\sum_{l=n}^{+\infty}\gamma(v^*,b)^l \leq 4^{2-w(\Yb_0)}\cdot 2^{1-n}\label{eq:largedevestim}
\end{align}
provided that $b>10^4$ and taking advantage of the stochastic domination of Proposition \ref{prop:widthgrowth}. Putting the above together and using that $p<2b^{-2}$ we arrive to the estimate
\begin{align}
\Prob\{A\text{ and }K=0\,|\,\Yb(0)=\Yb_0\}
&\leq \frac{64}{2+b^2} \cdot 4^{-w(\Yb_0)}\nonumber\\
&\leq \frac{64}{2+b^2} \cdot \frac{2^{-w(\Yb_0)}}{w(\Yb_0)^2} \cdot \max_{x\geq 0}(x^2\cdot 2^{-x})\nonumber\\
&\leq \frac{128}{2+b^2} \cdot \frac{2^{-w(\Yb_0)}}{w(\Yb_0)^2}.\label{eq:zerobound}
\end{align}
This implies \eqref{eq:inductionbound} in case of $k=0$.
\paragraph{Inductive step.}
We let $k=m$ for a fixed $m>1$ and we assume that \eqref{eq:inductionbound} holds for all $0\leq k\leq m-1$.
Considering \eqref{eq:zerobound} it is enough to prove that
\begin{equation*}
\Prob\{A\text{ and \textbf{1}}\leq K\leq m\,|\,\Yb(0)=\Yb_0\} \leq \bigg(C(b)-\frac{128}{2+b^2}\bigg)\cdot \frac{2^{-w(\Yb_0)}}{w(\Yb_0)^2},
\end{equation*}
The approach we follow is close to that of \cite{bramsongray}. Under the event $1\leq K\leq m$, there is at least one positive separation time. Hence it is reasonable to call the very first such time $\sigma$. At $\sigma$ an $i$-gap (for some $i\in\Zbb$) separates the whole process $\Yb$ into two sub-processes $(\Yb^{(1)}(\sigma+t))_{t\geq0}$ and $(\Yb^{(2)}(\sigma+t))_{t\geq0}$ which remain separated by at least one vacant site for all future time. Under $A$ these sub-processes have strictly less than $m$ separation times and so we can apply the induction hypothesis. We note that for each of these sub-processes, $\sigma$ cannot be a separation time separately since $\sigma$ is the very first separation time of $\Yb$.

For this argument to be accomplished we need to stop $\Yb$ at $\sigma$ and then restart the sub-processes from that time. Notice, however, that $\sigma$ is \emp{not} a Markovian stopping time since it depends on the whole future of $\Yb$ and so the above idea cannot be directly applied with $\sigma$. To circumvent this difficulty we define the Markovian sequence of stopping times $\sigma_{1}^i,\sigma_{2}^i,\ldots,\sigma_{n}^i,\ldots$ for each $i\in\Zbb^+$, where $\sigma_n^{i}$ denotes the $n^{\text{th}}$ step at which an $i$-gap occurred ($n\in\Zbb^{+}_0$). Recall that an $i$-gap is \emp{not} necessarily separating but for a separation time to occur we need an $i$-transition to produce that for some $i\in\Zbb^+$.

Using the law of total probability we obtain that
\begin{align}
&\Prob\{A\text{ and }1\leq K\leq m\,|\,\Yb(0)=\Yb_0\}\nonumber\\
&\quad=\sum_{j=3}^{+\infty}\sum_{i=1}^{j-1}\sum_{n=1}^{+\infty}\Prob\{A, 1\leq K\leq m, W(\sigma)=j, \sigma=\sigma_{n}^i\,|\,\Yb(0)=\Yb_0\}.\label{eq:totprob}
\end{align}
That is we will calculate the probability of the event $\{A\text{ and }1\leq K\leq m\}$ under the restriction that the first separation happens at the $n^{\text{th}}$ transition that generates an $i$-gap for each possible width $j\in\Zbb^+$ at separation and for every $i\in\Zbb^+$ and $n\in\Zbb^+_0$.

For an $i\in\Zbb^+$ and $t\geq0$, let $B_i(t)$ be the event that $\sum_{l=i_{\lrm}}^{i-1}Y_{l}(\sigma)$ and $\sum_{l=i+1}^{i_\rrm}Y_{l}(\sigma)$ are both \emp{even} numbers. Due to the parity conserving property of double branching annihilating random walkers we can simplify the previous display, namely
\begin{align*}
\eqref{eq:totprob}=\sum_{j=5}^{+\infty}\sum_{i=1}^{j-2}\sum_{n=1}^{+\infty}\Prob\{A, 1\leq K\leq m, B_i(\sigma), W(\sigma)=j, \sigma=\sigma_{n}^i\,|\,\Yb(0)=\Yb_0\}.
\end{align*}
The strong Markov property then implies that
\begin{align*}
&\Prob\{A\text{ and }1\leq K\leq m\,|\,\Yb(0)=\Yb_0\}\\
&\quad\leq \sum_{j=5}^{+\infty}\sum_{i=1}^{j-2}\sum_{n=1}^{+\infty}\sum_{\yb\in\Sc}
\Prob\{\Yb(\sigma_{n}^i)=\yb, B_i(\sigma_{n}^i), W(\sigma_{n}^i)=j, \sigma_{n}^i<\tau_N\,|\,\Yb(0)=\Yb_0\}\\
&\qquad\qquad\qquad\qquad\;\cdot\Prob\{A_i, 0\leq K < m\,|\,\Yb(\sigma_{n}^i)=\yb, W(\sigma_{n}^i)=j, B_i(\sigma_{n}^i), \sigma_{n}^i<\tau_N\},
\end{align*}
since $\sigma<\tau_N$ whenever $K\geq 1$. $A_i$ is the event that an $i$-gap separates the process $\Yb$ into two sub-processes which remain separated by at least one vacant site and each dies out in finite time.

Fix an $n\in\Zbb^+_0$ and let $i,j\in\Zbb^+$ be those indices for which the configuration $\yb\in\Sc$ has an $i$-gap and $w(\yb)=j$. Let $(\tilde{\Yb}^{(1)}(t))_{t\geq0}$ and $(\tilde{\Yb}^{(2)}(t))_{t\geq0}$ be two independent double branching annihilating random walks with initial configurations $\tilde{\yb}^{(1)}\in\Sc$ and $\tilde{\yb}^{(2)}\in\Sc$ for which $\tilde{y}^{(1)}_{l}=y_l\ind\{l\leq i\}$ and $\tilde{y}^{(2)}_l=y_l\ind\{l>i\}$, respectively, for all $l\in\Zbb$. We claim that
\begin{multline}
\Prob\{A_i \text{ and } 0\leq K\leq m-1\,|\,\Yb(\sigma_{n}^i)=\yb, W(\sigma_{n}^i)=j, B_i(\sigma_{n}^i),\sigma_n^i<\tau_N\}\\
\leq \Prob\{A^{(1)},A^{(2)}\} = \Prob\{A^{(1)}\}\cdot\Prob\{A^{(2)}\}\label{eq:jointdeath},
\end{multline}
where $A^{(1)}$ ($A^{(2)}$) is the event that the process $\tilde{\Yb}^{(1)}$ ($\tilde{\Yb}^{(2)}$) dies out with at most $m-1$ separation times.
To prove the above inequality we realize $\tilde{\Yb}^{(1)}$ and $\tilde{\Yb}^{(2)}$ in a common probability space giving birth to the joint process $\tilde{\Yb}$. This evolves according to the following rules.
\begin{itemize}
\item Initially, with probability $\frac{q^{(a)}}{q^{(1)}+q^{(2)}}$, we execute the first transition of $\tilde{\Yb}^{(a)}$, where $q^{(a)}$ is the sum of the rates of the particles of $\tilde{\yb}^{(a)}$ for $a\in\{1,2\}$.
\item After any transition being executed we choose the next with probability
\[
\frac{q^{(a)}(t)}{q^{(1)}(t)+q^{(2)}(t)}
\]
at which we execute the subsequent transition of $\tilde{\Yb}^{(a)}(t)$ (that is not used yet until $t$), where $q^{(a)}(t)$ is the sum of the rates of the particles of $\tilde{\Yb}^{(a)}(t)$ for $a\in\{1,2\}$.
\end{itemize}
We apply the previous rules \emp{until} a particle of $\tilde{\Yb}^{(1)}$ does not meet, i.e.\ become adjacent, with a particle of $\tilde{\Yb}^{(2)}$. When a meeting occurs we clear all subsequent events of $\tilde{\Yb}^{(1)}$ and $\tilde{\Yb}^{(2)}$. From that time the paths of $\tilde{\Yb}$ are built up from a completely independent new set of clocks.

It is not hard to see that $\Yb$, restarted from $\sigma_n^i$ along with the conditions that $\Yb(\sigma_n^i)=\yb$, $B_i(\sigma_{n}^i)$, $W(\sigma_{n}^i)=j$, $\sigma_{n}^i<\tau_N$, and the process $\tilde{\Yb}$ share the same probability law. Furthermore, under the restriction that $\tilde{\Yb}$ initially falls apart into two sub-processes by the $i$-gap and they remain separated by at least one vacant site, the event that $\tilde{\Yb}$ dies out with at most $m-1$ separation times \emp{implies} the occurrence of both of the events $A^{(1)}$ and $A^{(2)}$ by the coupling. This eventually proves the inequality of \eqref{eq:jointdeath}.

Now, applying the induction hypothesis separately for $\Prob\{A^{(1)}\}$ and $\Prob\{A^{(2)}\}$ of \eqref{eq:jointdeath}, we obtain that
\begin{multline*}
\Prob\{A_i \text{ and } 0\leq K\leq m-1\,|\,\Yb(\sigma_{n}^i)=\yb, W(\sigma_{n}^i)=j, B_i(\sigma_{n}^i),\sigma_n^i<\tau_N\}\\
\leq \bigg(C(b)\cdot \frac{2^{-i}}{i^2}\bigg)\cdot \bigg(C(b)\cdot \frac{2^{-(j-i-1)}}{(j-i-1)^2}\bigg)
\leq 18C(b)^2\cdot\frac{2^{-j}}{i^2\cdot (j-i)^2}.
\end{multline*}
It then follows that
\begin{multline*}
\Prob\{A\text{ and }1\leq K\leq m\,|\,\Yb(0)=\Yb_0\}\\
\leq 18C(b)^2\cdot\sum_{j=5}^{+\infty}\sum_{i=1}^{j-2}\frac{2^{-j}}{i^2\cdot (j-i)^2}\cdot
\sum_{n=1}^{+\infty}\Prob\{\sigma_{n}^i<\tau_N, W(\sigma_{n}^i)=j\,|\,\Yb(0)=\Yb_0\}.
\end{multline*}
It is only left to estimate the inner sum of the previous display. This can be achieved by using Proposition \ref{prop:widthgrowth} and the same large deviation estimates we established in case of $k=0$. We define $\vartheta$ to be the following stopping time. If $j < w(\Yb_0)$ then $\vartheta=0$, otherwise, in case of $j\geq w(\Yb_0)$, $\vartheta$ is the (possibly) random index when $(W(\tau_l))_{l\in\Zbb_0^{+}}$ first reaches $j$.

Let $M^{(j)}$ be the total number of times the process $(W(\tau_l)-w(\Yb_0))_{\vartheta \leq l <\tau_N}$ hits $j-w(\Yb_0)$. Then
\begin{align*}
&\sum_{n=1}^{+\infty}\Prob\{\sigma_{n}^i<\tau_N, W(\sigma_{n}^i)=j\,|\,\Yb(0)=\Yb_0\}\\
&\quad=\Exp\bigg[\sum_{n=1}^{+\infty}\ind\{\sigma_{n}^i<\tau_N, W(\sigma_{n}^i)=j\} \,\bigg|\, \Yb(0)=\Yb_0\bigg]\\
&\quad\leq 2\cdot \Exp\big[M^{(j)} \,\big|\, \Yb(0)=\Yb_0\big]
=2\cdot \sum_{n=1}^{+\infty}\Prob\{M^{(j)}\geq n \,\big|\, \Yb(0)=\Yb_0\},
\end{align*}
where the extra factor in front of the expectation comes from the fact that any lattice point can be vacated at most with rate $2$ provided that $b>10^4$ and $p<2b^{-2}$. Now, the same argument we used in establishing \eqref{eq:largedevestim} can be pushed through here as well. By replacing the corresponding constants we obtain that
\[
\Prob\{M^{(j)}\geq n \,\big|\,\Yb(0)=\Yb_0\} =
\left\{
\begin{array}{ll}
4^{j-w(\Yb_0)}\cdot 2^{1-n}, &\quad \hbox{if $j < w(\Yb_0)$;}\\
4\cdot 2^{1-n}, &\quad \hbox{if $j \geq w(\Yb_0)$}
\end{array}
\right.
\]
using the strong Markov property for the stopping time $\vartheta$ and restarting $\Yb$ from that time. Putting the pieces together we arrive to
\begin{align}
&\Prob\{A\text{ and }1\leq K\leq m\,|\,\Yb(0)=\Yb_0\}\nonumber\\
&\qquad\leq
144C(b)^2\cdot 2^{-w(\Yb_0)}\cdot \sum_{j=5}^{w(\Yb_0)-1}\frac{1}{(w(\Yb_0)-j)^2}\cdot\sum_{i=1}^{j-2}\frac{1}{i^2\cdot (j-i)^2}\nonumber\\
&\qquad
+288C(b)^2\cdot\sum_{j=w(\Yb_0)}^{+\infty}2^{-j}\cdot\sum_{i=1}^{j-2} \frac{1}{i^2\cdot (j-i)^2},\label{eq:lastbutone}
\end{align}
It is not hard to see that for any integer $u\geq 2$:
\[
\sum_{x=1}^{u-1}\frac{1}{x^2(u-x)^2} = \frac{2}{u^2}\bigg[\frac{2}{u}\sum_{x=1}^{u-1}\frac{1}{x}+\sum_{x=1}^{+\infty}\frac{1}{x^2}\bigg] \leq \frac{6}{u^2}.
\]
Applying the previous estimate for \eqref{eq:lastbutone} we obtain that
\begin{align*}
\Prob\{A\text{ and }1\leq K\leq m\,|\,\Yb(0)=\Yb_0\}
&\leq 10^4\cdot C(b)^2\cdot \frac{2^{-w(\Yb_0)}}{w(\Yb_0)^2}\\
&\leq \bigg(C(b)-\frac{128}{2+b^2}\bigg)\cdot \frac{2^{-w(\Yb_0)}}{w(\Yb_0)^2},
\end{align*}
since $b>10^4$, which completes the proof.
\end{proof}

\addcontentsline{toc}{section}{References}%
\bibliographystyle{plain}%
\bibliography{survdbarw}%

\end{document}